\newcommand*{\calF}{\mathcal{F}}%
\newcommand*{\oset}[1]{\left<#1\right>}
\newcommand*{\ceil}[1]{\lceil#1\rceil}%
\newcommand*{\floor}[1]{\lfloor#1\rfloor}%
\newcommand*{\Positives}{\ensuremath{\mathbb{Z}^+}}%
\newcommand*{\chicf}{\chi_{\text{\textrm{\textup{cf}}}}}%
\newcommand*{\chiodd}{\chi_{\text{\textrm{\textup{odd}}}}}%
\newcommand*{\chium}{\chi_{\text{\textrm{\textup{um}}}}}%
\newcommand*{\chirb}{\chi_{\text{\textrm{\textup{rb}}}}}%
\newcommand*{\chiump}{\chium^{\text{\textrm{\textup{p}}}}}%
\newcommand*{\chicfp}{\chicf^{\text{\textrm{\textup{p}}}}}%
\newcommand*{\chioddp}{\chiodd^{\text{\textrm{\textup{p}}}}}%
\newcommand*{\gensymbol}{{\diamond}}%
\newcommand*{\chigenp}{\chi_{\gensymbol}^{\text{\textrm{\textup{p}}}}}
\newcommand*{\containssubgraph}{\supseteq}%
\newcommand*{\subgraph}{\subseteq}%
\newcommand*{\issubgraphof}{\subgraph}%
\newcommand*{\issupergraphof}{\containssubgraph}%
\newcommand*{\card}[1]{\lvert#1\rvert}%
\newcommand*{\fracol}[2]{{#1}/{#2}}%
\newcommand*{\issubsetof}{\subseteq}%
\newcommand*{\ODD}{\text{\textrm{\textup{ODD}}}}%
\newcommand*{\CF}{\text{\textrm{\textup{CF}}}}%
\newcommand*{\UM}{\text{\textrm{\textup{UM}}}}%
\theoremstyle{plain}
\newtheorem{theorem}{Theorem}[section]
\newtheorem{lemma}[theorem]{Lemma}
\newtheorem{clai}[theorem]{Claim}
\newtheorem{corollary}[theorem]{Corollary}
\theoremstyle{definition}
\newtheorem{defi}[theorem]{Definition}
\newtheorem{obs}[theorem]{Observation}
\newtheorem{example}[theorem]{Example}
\theoremstyle{remark}
\newtheorem{remark}[theorem]{Remark}
\begin{document}

\title
  {Unique-maximum and conflict-free coloring\\
   for hypergraphs and tree graphs%
  \thanks{A preliminary version of this work appeared in the
          38th International Conference on Current Trends
          in Theory and Practice of Computer Science 
          (SOFSEM 2012).}}

\date{}%

\author{%
        {Panagiotis Cheilaris}%
        \thanks{Department of Informatics,
                Universit{\`a} della Svizzera italiana,
                Lugano, Switzerland.}
   \and {Bal{\'a}zs Keszegh}%
        \thanks{Alfr\'ed R\'enyi Institute of Mathematics, 
          Budapest, Hungary.}
   \and {D{\"o}m{\"o}t{\"o}r P{\'a}lv{\"o}lgyi}%
        \thanks{E\"otv\"os University, Budapest, Hungary.}%
}


%

\maketitle

\begin{abstract}
We investigate the relationship 
between two kinds of vertex colorings of hypergraphs:
unique-maximum colorings and 
conflict-free colorings.
In a unique-maximum coloring, the colors are ordered, and 
in every hyperedge of the hypergraph the
maximum color in the hyperedge occurs in only one vertex of 
the hyperedge.
In a conflict-free coloring, 
in every hyperedge of the hypergraph there exists
a color in the hyperedge that occurs 
in only one vertex of the hyperedge.
We define corresponding unique-maximum and conflict-free chromatic
numbers and investigate their relationship in arbitrary
hypergraphs. Then, we concentrate on hypergraphs that are induced
by simple paths in tree graphs.
\end{abstract}



	   

\section{Introduction}

A hypergraph $H$ is a pair $(V,E)$, where $E$ (the hyperedge set)
is a family of non-empty subsets of $V$ (the vertex set).
A vertex coloring of a
hypergraph $H=(V,E)$ is a function $C \colon V \to \Positives$.

A hypergraph is a generalization of a graph.
Therefore, it is
natural to consider how to generalize proper vertex
coloring of a graph 
to a vertex coloring of a hypergraph. 
(In a proper vertex coloring of a graph, 
any two vertices neighboring with an
edge in the graph have to be assigned different colors by the
coloring function $C$.) 
Vertex coloring in hypergraphs can be defined in many ways, so that restricting
the definition to simple graphs coincides with proper graph coloring.

At one extreme, it is only required that the vertices of each hyperedge
are not all colored with the same color (except for singleton hyperedges).
This is called a \emph{non-monochromatic} coloring of a hypergraph. The
minimum number of colors necessary to color in such a way a
hypergraph $H$ is the (non-monochromatic) chromatic
number of $H$, denoted by $\chi(H)$.

At the other extreme, we can require that the vertices of each hyperedge
are all colored with different colors. This is called a
\emph{colorful} or \emph{rainbow} coloring of $H$ and we have the
corresponding rainbow chromatic number of $H$, denoted by
$\chirb(H)$.

In this paper we investigate the following two types of vertex colorings of
hypergraphs that are between the above two extremes.

\begin{defi} \label{def:ordcol}
A \emph{unique-maximum coloring}
of $H=(V,E)$ with $k$ colors
is a function $C \colon V \to \{1,\dots,k\}$
such that for  each $e \in E$
the \emph{maximum color in $e$ occurs exactly once} on the vertices of $e$.
The minimum $k$ for which a hypergraph $H$ has a unique-maximum coloring
with $k$ colors
is called the \emph{unique-maximum chromatic number} of $H$ and is
denoted by $\chium(H)$.
\end{defi}

\begin{defi} \label{def:cfcol}
A \emph{conflict-free coloring}
of $H=(V,E)$ with $k$ colors
is a function $C \colon V \to \{1,\dots,k\}$
such that for each $e \in E$ 
there is \emph{a color in $e$ that occurs exactly once} on the vertices of $e$.
The minimum $k$ for which a hypergraph $H$ has a conflict-free coloring
with $k$ colors
is called the \emph{conflict-free chromatic number} of $H$ and is
denoted by $\chicf(H)$.
\end{defi}

We also introduce a new coloring, that proves useful in showing 
lower bounds, and that could be of independent interest.

\begin{defi} \label{def:oddcfcol}
An \emph{odd coloring}
of $H=(V,E)$ with $k$ colors
is a function $C \colon V \to \{1,\dots,k\}$
such that for each $e \in E$ 
there is \emph{a color that occurs an odd number of times} on the vertices of $e$.
The minimum $k$ for which a hypergraph $H$ has an odd coloring
with $k$ colors
is called the \emph{odd chromatic number} of $H$ and is
denoted by $\chiodd(H)$.
\end{defi}

Every rainbow coloring is unique-maximum, every unique-maximum
coloring is conflict-free, and every conflict-free coloring is
odd and non-monochromatic. Therefore, for every hypergraph $H$,
$\max(\chi(H), \chiodd(H)) \leq \chicf(H) \leq \chium(H) \leq \chirb(H)$.
Note that an odd coloring can be monochromatic.

The study of 
conflict-free coloring hypergraphs started in \cite{ELRS03jo,Sm03},
with an emphasis in hypergraphs induced by geometric shapes.
The main application of conflict-free coloring is that it 
models a frequency assignment for cellular
networks. A cellular network consists of two kinds of nodes:
\emph{base stations} and \emph{mobile agents}. Base stations have
fixed positions and provide the backbone of the network; they are
represented by vertices in \(V\). Mobile agents are the clients of the
network and they are served by base stations. This is done as
follows: Every base station has a fixed frequency; this is represented 
by the coloring \(C\), i.e., colors represent frequencies. If an
agent wants to establish a link with a base station it has to tune
itself to this base station's frequency. Since agents are mobile,
they can be in the range of many different base stations. To avoid
interference, the system must assign frequencies to base stations
in the following way: For any range, there must be a base station
in the range with a frequency that is not used by some other
base station in the range. One can solve the problem by assigning
\(n\) different frequencies to the \(n\) base stations. However,
using many frequencies is expensive, and therefore, a scheme that
reuses frequencies, where possible, is preferable. Conflict-free coloring
problems have been the subject of many recent papers due to their
practical and theoretical interest (see e.g.\
\cite{PT03,HS03,Chenetal2006,EM05,%
BCS2008talg}).

Most approaches in the conflict-free coloring literature
rely on the stronger unique-maximum colorings (a notable exception is 
the `triples' algorithm in \cite{BCS2008talg}), 
because unique-maximum colorings are
easier to argue about in proofs, due to their additional
structure. Another advantage of unique-maximum colorings 
is the simplicity of computing the unique color
in any range (it is always the maximum color), given a
unique-maximum coloring, which can be helpful if very simple
mobile devices are used by the agents. 

Other hypergraphs that have been studied with respect to these
colorings, are ones which are induced by a graph and (a)~its
neighborhoods or (b)~its paths:
\begin{itemize}
\item[(a)]
Given a graph $G$, consider the hypergraph with the
same vertex set as $G$ 
and a hyperedge for every distinct vertex neighborhood of $G$; 
such conflict-free colorings have been studied in
\cite{CheilarisCUNYthesis2009,PachTardoscfgh2009}.
\item[(b)]
Given a graph $G$, consider the hypergraph $H$ with the
same vertex set as $G$ 
and a hyperedge for every distinct vertex set 
that can be spanned by a \emph{simple} path of $G$.
A unique-maximum (respectively conflict-free, odd) coloring of $H$ is called a
unique-maximum (respectively conflict-free, odd) coloring of $G$ with respect to
paths; we also define the corresponding graph chromatic numbers,
$\chiump(G) = \chium(H)$, $\chicfp(G) = \chicf(H)$ and $\chioddp(G) = \chiodd(H)$. 
Sometimes to improve readability of the text, we simply talk about the UM 
(respectively CF, ODD) chromatic number of a graph.
\end{itemize}

Unique-maximum colorings with respect to paths of graphs 
are known alternatively in the literature as
\emph{ordered colorings} or \emph{vertex rankings},
and the unique-maximum chromatic number is also known as
\emph{tree-depth} \cite{NMejc2006treedepth}.
The problem of computing such unique-maximum colorings is a well-known and
widely studied problem (see e.g.\ \cite{orderedcoloring}) with many
applications including VLSI design
\cite{DBLP:conf/focs/Leiserson80} and parallel Cholesky
factorization of matrices \cite{liu:134}. The 
problem is also interesting for the Operations Research community,
because it has applications in planning efficient assembly of
products in manufacturing systems \cite{optnoderanktree}. 
In general, it seems that the vertex ranking problem can model
situations where interrelated tasks have to be accomplished fast
in parallel (assembly from parts, parallel query optimization in
databases, etc.).
For general graphs, finding the exact unique-maximum chromatic
number with respect to paths of a
graph is NP-complete 
\cite{Pothen1988TR,LTT1989dam,rankings_of_graphs_1998,NMejc2006treedepth} 
and there is a 
polynomial time $O(\log^2{n})$ approximation algorithm
\cite{DBLP:journals/jal/BodlaenderGHK95}, where $n$ is the number
of vertices. 

The paper \cite{ChTjda2011} studied the relationship between the two
graph chromatic numbers, $\chiump(G)$ and $\chicfp(G)$,
showing that for every graph $G$,
$\chiump(G) \leq 2^{\chicfp(G)}-1$, and providing a sequence of graphs for
which the ratio $\chiump(G)/\chicfp(G)$ tends to $2$.
Moreover, the authors of \cite{ChTjda2011} proved that even checking whether a
given coloring of a graph is conflict-free is coNP-complete
(whereas the same problem for unique-maximum colorings is in P).

Odd colorings with respect to paths of graphs have been recently
studied in \cite{parityvc2011,parityvctrees2012}, independently
from our work. In these papers, they are called \emph{parity
vertex colorings}.

\subsection*{Our results}
In this work, we study the relationship between unique-maximum and
conflict-free colorings. 

First, we give an exact 
answer to the question ``How much larger than $\chicf(H)$ can
$\chium(H)$ be?'' for a general hypergraph $H$.
In section~\ref{sec:hypergraphcfum}, we show that if for a
hypergraph $H$, $\chicf(H) = k > 1$, then $\chium(H)$ is bounded
from above, roughly, by $\frac{k-1}{k} {\card{V}}$,
and this is tight; the result remains true even if we restrict
ourselves to uniform hypergraphs.

Then, we turn to hypergraphs induced by paths in tree graphs and
prove an upper bound for $\chiump(T)$ that is polynomial in 
$\chicfp(T)$, where $T$ is a tree graph. 
We study trees because for general graphs
the only known upper bound for $\chiump(G)$ is 
exponential in $\chicfp(G)$; see \cite{ChTjda2011}.
In section~\ref{sec:treecfum}, we show that for every tree graph
$T$,
$\chiump(T) \leq ({\chicfp(T)})^{3}$ and provide a 
sequence of trees for 
which the ratio $\chiump(T)/\chicfp(T)$ tends at least to 
$\log_{2}{3} \approx 1.58$ (corollary~\ref{last}).
Our results on trees have also implications for the relationship of the
ODD and UM chromatic number of trees.
In particular, corollary~\ref{last} disproves the following 
conjecture from \cite{parityvc2011}: 
``For any tree $T$ we have $\chiump(T) - \chioddp(T) \leq 1$''.
This conjecture was also disproved independently in
\cite{parityvctrees2012}, but our disproof is stronger in the
following sense: in \cite{parityvctrees2012}, the authors give a
sequence of trees for which the ratio $\chiump(T)/\chioddp(T)$
is at least $1.5$, whereas our corollary~\ref{last} implies 
a sequence of trees for which
the aforementioned ratio tends to at least 1.58.
We also improve the trivial lower bound on the ODD chromatic number of 
binomial trees given in \cite{parityvctrees2012} 
(see remark~\ref{rem:binomialtrees}).

Conclusions and open
problems are presented in section~\ref{sec:conclusion}.

\subsection{Preliminaries}

%
\begin{obs}\label{obs:monotonicitysubgraphs}
Each of the graph chromatic numbers
$\chiump$, $\chicfp$, and $\chioddp$, is monotone with respect to
subgraphs, i.e.,
if $H \issubgraphof G$, then $\chigenp(H) \leq \chigenp(G)$, 
where 
$\gensymbol \in \{\text{\textup{um}},\text{\textup{cf}},
  \text{\textup{odd}}\}$.
\end{obs}
\begin{proof}
A subgraph $H$ of a graph $G$ contains a subset of the paths
of $G$. 
\end{proof}

\begin{defi}[Parity vector]
Given a coloring $C \colon V \to \{1,\dots,k\}$ and a set 
$e \issubsetof V$,
the \emph{parity vector} of $e$ 
is an element of $\{0,1\}^k$
in which the $i^{\text{th}}$ coordinate equals 
the parity (0 or 1) of the number of 
elements in $e$ colored with $i$.
\end{defi}

\begin{remark}
A coloring of a hypergraph 
is odd if and only if the parity vector of 
every hyperedge is not the all-zero vector.
\end{remark}


\section{General hypergraphs}
\label{sec:hypergraphcfum}

In general, it is not possible to bound $\chicf$ with a function of
$\chiodd$. For example, the hypergraph $H'$ with
hyperedge set consisting of all triples of
$\{1,\dots,n\}$ has $\chiodd(H')=1$ and 
$\chicf(H')= \left\lceil \fracol{n}{2} \right\rceil$. 
Although $\chicf(H)=1$ implies $\chium(H)=1$,
we can have a big gap as is shown by the following theorem.

\begin{theorem} \label{genhyp}
For a hypergraph $H$
on $n$ vertices, $\chium(H)\le n-\lceil n/\chicf(H)\rceil+1$.
Moreover, this is the best possible bound, i.e., 
for any positive integer $n$ 
there exists a hypergraph on $n$ vertices for which equality holds.
\end{theorem}
\begin{proof}
A simple algorithm achieving the upper bound is the following.
Given a hypergraph $H$ with $\chicf(H)=k$, take a conflict-free coloring of
$H$ with $k$ colors, color the largest color class with color $1$,
all the other vertices with all different colors (bigger than
$1$). It is not difficult to see that this is 
a unique-maximum coloring and that 
it uses at most $n-\lceil n/k\rceil+1$ colors.

For a given $n$ and $k$ equality holds for the hypergraph $H$ 
whose $n$ vertices are partitioned into $k$ (almost) equal parts, 
all of size $\ceil{n/k}$ or $\floor{n/k}$, 
and its edges are all sets of size $2$ and $3$ covering vertices 
from exactly $2$ parts. 
We have $\chicf(H)=k$ because in any conflict-free coloring of $H$ 
there are no two vertices in different parts having the same color and 
$\chium(H)\ge n-\lceil n/k\rceil+1$ because in any unique-maximum coloring 
of $H$ all vertices must have different colors except that the vertices 
of one part can be all colored with $1$. 
\end{proof}
%
%
%



For uniform hypergraphs without small hyperedges, 
we can make the inequality tighter.

\begin{theorem} \label{lreghyp} 
If $l\ge3$ and $k\ge 2$ then for an arbitrary $l$-uniform hypergraph $H$ with $\chicf(H)=k$ having $n\ge 2kl$ vertices we have $\chium(H)\le n-\lceil n/k\rceil-l+4$. 
Moreover, this is the best possible bound, i.e., 
for arbitrary $n\ge 2kl$ there exists a hypergraph for which equality holds.
\end{theorem}


\begin{proof}
The proof is similar to the proof of theorem~\ref{genhyp}, 
although more complicated.

\smallskip

For the first part of the theorem, 
we describe an algorithm that produces a unique-maximum coloring with 
$n-\lceil n/k\rceil-l+4$ colors. 
Given an $l$-uniform hypergraph $H = (V,E)$ on $n$ vertices with $\chicf(H)=k$, 
take a conflict-free coloring $C_{\text{cf}}$ of $H$ using $k$ colors. 
Consider the $k$ color classes $X_i = C_{\text{cf}}^{-1}(i)$,
	for $i \in \{1, \dots, k\}$, 
and without loss of generality assume they are in order 
of non-increasing size, i.e., 
\[\card{X_1} \geq \card{X_2} \geq \dots \geq \card{X_k}.\]

Now, consider the following coloring $C_{\text{um}}$:
Color the vertices of $X_1$ with color $1$, 
color $\min(l-2,|X_2|)$ vertices of $X_2$ with color $2$, 
and color all other vertices with all different colors. 
Observe, first of all, that $C_{\text{um}}$ is a conflict-free
coloring, because it is a refinement of conflict-free coloring
$C_{\text{cf}}$.
We additionally prove that $C_{\text{um}}$ 
is a unique-maximum coloring.
Indeed, for an arbitrary edge $e$, if the maximum color occurring
in $e$ is greater than 2, then 
$e$ has the unique-maximum property, because each color greater than 2
occurs in exactly one vertex of the hypergraph.
Otherwise, the only colors that occur in $e$ are 1 and 2. 
Since
$\card{e} = l$ and color 2 occurs in at most $l-2$ vertices
of $e$, color 1 occurs in at least two vertices of $e$.
But then, $e$ has the conflict-free property if and only if
exactly one vertex of $e$ is colored with 2, i.e., 
$e$ has the unique-maximum property.

The number of colors used in $C_{\text{um}}$ 
is $2+n-\card{X_1}-\min(l-2,\card{X_2})$. 
%
If $\card{X_2} \ge l-2$ then 
(also because $\card{X_1} \ge \lceil n/k \rceil$) this number is at most
$n-\lceil n/k\rceil-l+4$. 
Otherwise, $\card{X_2} < l - 2$ and in that case
the number of colors used is 
$2+n-\card{X_1}-\card{X_2}$.
Using inequalities 
$\card{X_1} \ge \lceil n/k \rceil$
and $\card{X_1} + (k-1)\card{X_2} \ge n$, we get
\[
\card{X_1}+\card{X_2} 
 \ge 
   \frac{1}{k-1} \Bigl( n +(k-2) \Bigl\lceil \frac{n}{k} \Bigr\rceil \Bigr)
 = 
   \Bigl\lceil \frac{n}{k} \Bigr\rceil +
   \frac{1}{k-1} \Bigl( n - \Bigl\lceil \frac{n}{k} \Bigr\rceil \Bigr) 
  .
\]
Then, using inequality 
$\lceil \frac{n}{k} \rceil < \frac{n}{k} +1$,
we get
\[ 
\card{X_1}+\card{X_2} 
 > 
 \Bigl\lceil \frac{n}{k} \Bigr\rceil +
 \frac{1}{k-1} \Bigl( n - \frac{n}{k} - 1 \Bigr)
 =
 \Bigl\lceil \frac{n}{k} \Bigr\rceil +
 \frac{n}{k} - \frac{1}{k-1} 
 \ge 
 \Bigl\lceil \frac{n}{k} \Bigr\rceil +
 \frac{n}{k} - 1 
 .
\]
Finally, using inequality $n \ge 2kl$, we get
\[ 
\card{X_1}+\card{X_2} 
 \geq 
 \Bigl\lceil \frac{n}{k} \Bigr\rceil +
 2l -  1 
 \geq 
 \Bigl\lceil \frac{n}{k} \Bigr\rceil +
 l -  2
 .
\]
Thus, the number of colors used is 
$2+n-\card{X_1}-\card{X_2}
 \le n - \lceil n/k \rceil - l + 4
$.

\medskip

For the second part of the theorem,
given $k$, $l$, and $n$ with $n\ge 2kl$, we construct 
an $l$-uniform hypergraph $H$ 
with $\chicf(H)=k$ and 
$\chium(H)=n-\lceil n/k \rceil -l +4$.
We have $n$ vertices partitioned into $k$ almost equal parts 
$V_1,V_2,\ldots, V_k$, the first $k'$ having size $\ceil{n/k}$, 
the rest having size $\ceil{n/k}-1$. 
The hyperedge set of $H$ consists of all hyperedges of size $l$ for which 
there is a part $V_i$ that intersects the edge in exactly one vertex. 
During the rest of the proof we will use several times the 
pigeonhole principle on the above defined parts.

It is not difficult to see that 
the coloring defined by the partition $V_1$, \ldots, $V_k$ 
is a conflict-free coloring using $k$ colors, 
i.e., $\chicf(H) \leq k$.
We now prove that 
there is no conflict-free coloring using less than $k$ colors. 
For that, take a conflict-free coloring $C_{\text{cf}}$ of $H$ with the 
optimal number of colors. 

For a color $c$, if its color class $C_{\text{cf}}^{-1}(c)$ 
is covered by some $V_i$ then its size is at most $|V_i|$. 
Thus, we have at most $k'$ such color classes of size 
$\lceil n/k\rceil$ and the rest is of size at most $\lceil n/k\rceil-1$.
For a color $c$ for which its color class $C_{\text{cf}}^{-1}(c)$ 
is not covered by one part of the partition, $C_{\text{cf}}^{-1}(c)$ 
must intersect at least two different parts, 
$V_i$ and $V_j$ with $i \ne j$. 
If $|C^{-1}(c)|> 2l-4$ then either there is an $l$-subset of 
$C^{-1}(c)$ having exactly one point from $V_i$ or there is an $l$-subset of 
$C^{-1}(c)$ having exactly one point from $V_j$, 
which is a contradiction as these $l$-subsets 
would be monochromatic edges of $H$. 
If $|C_{\text{cf}}^{-1}(c)|\le 2l-4$, 
then our assumption $n\ge 2kl$ implies 
$2l-4<\lceil n/k\rceil-1$, i.e., 
if a color class is not covered by some $V_i$ 
then it is smaller than $\lceil n/k\rceil-1$. 
Hence, the only way we can color all the vertices using only 
$k$ colors is by not having color classes intersecting two parts and 
by having every color class equal to a part of the partition. 
Thus, we proved that if $n\ge 2kl$ then $\chicf(H)=k$ 
and also that the only optimal coloring is the one defined by the partition.

Now, take a unique-maximum coloring 
$C_{\text{um}}$
with the optimal number of colors. 
We prove that it uses at least $n-\ceil{n/k}-l+4$ colors. 
We define $c$ to be the \emph{biggest color} for which 
there are at least $2$ vertices having color $c$.
By definition every color bigger than $c$ is used only at most once in this 
unique-maximum coloring.
We define $Y$ to be the set of vertices with color $c$ and $Y'$ to
be the set of vertices with color $c$ or smaller. 

\begin{obs} \label{obsunique}
Coloring  $C_{\text{um}}$  uses $n-|Y'|+c$ colors.
\end{obs}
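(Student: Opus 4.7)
The plan is to argue that, after a harmless normalization, the total number of colors used equals exactly $c$ plus the number of vertices receiving colors strictly larger than $c$, which in turn equals $n-|C'|$.

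First I would justify that we may assume the colors used by this optimal um coloring are precisely $\{1,2,\dots,m\}$ for some $m$. If some color $i\le m$ were unused, we could decrement every color greater than $i$ by one; this operation preserves the relative order of colors on every edge, hence preserves the unique-maximum property, and produces a um coloring with strictly fewer colors, contradicting the minimality of the chosen coloring.

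Next I would count the vertices outside $C'$. By the very definition of $c$ as the \emph{biggest} color that is used on at least two vertices, every color strictly larger than $c$ appears on exactly one vertex. Under the normalization above, these colors are exactly $c+1,c+2,\dots,m$, and the vertices they color are exactly the vertices of $V\setminus C'$. Hence $|V\setminus C'|=m-c$, i.e.\ $m=n-|C'|+c$, which is what the observation asserts.

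The main thing to get right is the normalization step: one must check that removing a ``gap'' in the set of used colors truly preserves the unique-maximum condition on every hyperedge, which follows because order-preserving relabellings do not change which vertex of an edge carries the maximum color nor whether that maximum is unique. Everything else is a one-line counting argument, so I do not expect any real obstacle.
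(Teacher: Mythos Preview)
Your proposal is correct and matches the paper's reasoning: the paper states this as an observation without proof, relying implicitly on the same two facts you spell out --- that an optimal um coloring can be assumed to use exactly the colors $\{1,\dots,m\}$ with no gaps, and that the colors $c+1,\dots,m$ are then in bijection with the vertices outside $C'$. Your explicit normalization argument is the standard justification for the ``no gaps'' assumption and is exactly what the paper is taking for granted.
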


Since every edge has the unique-maximum property, the following is true.

\begin{obs} \label{obscc}
There is no edge that contains only vertices from $Y'$ 
and contains at least two vertices from $Y$.
\end{obs}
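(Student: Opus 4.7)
The plan is to obtain the observation as an immediate consequence of the unique-maximum property, by unwinding the definitions of $c$, $C$, and $C'$ that were just set up. A one-step proof by contradiction should suffice, and no appeal to the particular structure of the hypergraph $H$ constructed earlier in the proof is needed.

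Concretely, I would suppose towards a contradiction that some edge $e$ of $H$ satisfies $e\subseteq C'$ and $|e\cap C|\ge 2$. Because $e\subseteq C'$, every vertex of $e$ is colored with a value at most $c$, so the maximum color appearing on $e$ is at most $c$. Because $|e\cap C|\ge 2$, at least two vertices of $e$ are colored with the value $c$ itself, so this maximum is attained and is attained at least twice. Hence the maximum color on $e$ occurs on more than one vertex, directly contradicting the defining property of a unique-maximum coloring, which requires the maximum color on every hyperedge to occur exactly once.

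There is essentially no obstacle to this argument; it is a definition chase. The only subtlety is to note that $c$ is well-defined and is an actual color used by the um coloring, which is exactly the condition built into its definition as the largest color used on at least two vertices. The point of the observation is clearly forward-looking: together with Observation~\ref{obsunique}, which already expresses the number of colors used as $n - |C'| + c$, it will constrain how edges of $H$ can sit inside $C'$, and eventually force $|C'|$ to be small enough to yield the claimed bound $n - \lceil n/k \rceil - l + 4$ on the number of colors in the optimal um coloring.
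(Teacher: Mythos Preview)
Your argument is correct and is exactly the paper's approach: the paper justifies the observation with the single sentence ``As every edge is um colored, the following is true,'' which is precisely the definition-chase you spell out. Your added remark on the well-definedness of $c$ is a reasonable aside, though the degenerate case where no color is repeated would already give $n$ colors and the bound trivially.
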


If $Y'$ can be covered by some $V_i$, then $|Y'|\le\lceil n/k\rceil$ and so we used at least $n-\lceil n/k\rceil+1\ge n-\lceil n/k\rceil-l+4$ colors altogether.
If $Y'$ cannot be covered by one $V_i$ of the partition, then we have 3 cases:
\begin{itemize}

\item[(i)] $Y$ cannot be covered by one part.

In this case there are two vertices with color $c$ that are in
different parts, $x$ in $V_i$ and $y$ in $V_j$ with $i \ne j$. 
If $|Y'|>2l-4$, then there is an $l$-subset of $Y'$ containing only 
$x$ from $V_i$ and $l-1$ vertices from other parts (including $y$)
or an $l$-subset of $Y'$ containing only $y$ from $V_j$ and $l-1$ vertices from other parts (including $x$). 
Any of these two subsets would be an edge of $H$ contradicting 
observation~\ref{obscc}. 
Thus, $|Y'|\le 2l-4$ and so we used at least 
$n-(2l-4)+1\ge n-\lceil n/k\rceil-l+4$ colors 
(for the last inequality we used that $n\ge 2kl$).
\end{itemize}

In the next two cases, as $Y$ is contained in some $V_i$, 
but $Y'$ is not, we have $Y'\ne Y$ and thus $c\ge 2$.
\begin{itemize}
\item[(ii)] $Y$ is contained in some $V_i$ and 
            $Y'$ can be covered by two parts $V_i$ and $V_j$.

If $|Y'\cap V_i|> l-2$ then there exists an $l$-subset of $Y'$ containing exactly one vertex from $V_j$ and $l-1$ vertices from $V_i$ such that at least two of these vertices have color $c$. 
This subset would be an edge of $H$ contradicting observation~\ref{obscc}. 
Thus, $|Y'|\le |V_j|+l-2\le \lceil n/k\rceil+l-2$, 
and as $c\ge 2$,
we used at least $n-(\lceil n/k\rceil+l-2)+2=n-\lceil n/k\rceil-l+4$ colors.

\item[(iii)] $Y$ is contained in some $V_i$ and 
             $Y'$ cannot be covered by two parts.

In this case $Y'$ contains points from at least three parts, 
$V_i$, $V_j$ and some $V_{h}$. 
Now, it is easy to see that if $|Y'|>2l-6$ 
then there exists 
an $l$-subset of $Y'$ containing at least two vertices from $V_i$ with color $c$ and
that either has exactly one vertex from $V_j$ 
or exactly one vertex from $V_{h}$. 
This subset would be an edge in $H$ contradicting observation~\ref{obscc}. 
Thus, $|Y'|\le 2l-6$, and as $c\ge 2$, 
we used at least 
$n-(2l-6)+2\ge n-\lceil n/k\rceil-l+4$ colors. 
\qedhere
\end{itemize}
\end{proof}

\section{Tree graphs}%
\label{sec:treecfum}
In this section, to ease readability 
we use $\UM$ for $\chiump$, $\CF$ for $\chicfp$ and $\ODD$ for $\chioddp$.
We denote by $P_n$ the path graph with $n$ vertices.
As a warm-up we prove a simple claim about the odd chromatic 
number of the path graph. 
Our proof is a showcase of a parity vector argument,
which we are going to also use later.
For completeness, we include a computation of the conflict-free and 
unique-maximum chromatic numbers of the path graph
\cite{ELRS03jo}.
(Throughout this paper we use base 2 logarithms, which are denoted
by ``$\log$''.)%

\begin{clai}\label{path}
For $n \geq 1$, $\ODD(P_n)=\CF(P_n)=\UM(P_n)=
  \left\lceil \log (n+1)\right\rceil$.
\end{clai}
\begin{proof}
It is easy to see that $\UM(P_n)\le \left\lceil \log (n+1)\right\rceil$: 
assign the biggest color only to a median vertex of the path
and then use recursion. 
%
Since we know that $\UM(P_n)\ge \CF(P_n)\ge \ODD(P_n)$, 
it is enough to prove that $2^{\ODD(P_n)}> n$. 
Take the $n$ paths starting from one endpoint. 
If there were two with the same parity vector, 
their symmetric difference (which is also a path) 
would contain an even number of each color. 
Thus, we have at least $n$ different parity vectors, 
none of which is the all-zero vector. 
But the number of non-zero parity vectors is at most $2^{\ODD(P_n)}-1$. 
\end{proof}

\subsection{Upper bound for unique-maximum number of binary trees}

We denote by $B_d$ the (rooted) complete binary tree with $d$ 
levels (and $2^{d}-1$ vertices).
By convention, $B_0$ is the empty graph.
It is easy to see that $\UM(B_d)=d$; for an optimal unique-maximum
coloring, color the leaves of $B_d$ with color 1, their parents
with color 2, and so on, until you color the root with color $d$;
for a matching lower bound, use induction on $d$.
In this section, we prove an upper bound for $\UM(B_d)$ that is
quadratic on $\CF(B_d)$. In fact, we will prove a stronger
statement, that is, a bound for $\UM(B_d)$ that is quadratic on
$\ODD(B_d)$. 
Moreover, 
instead of proving a bound just for complete binary trees, we
are going to prove a bound for subdivisions of 
complete binary trees, because we will need that later in
subsection~\ref{subsec:arbitrarytrees}. 
We first need the following definitions.

\begin{defi}
A graph $H$ is a  \emph{subdivision} of $G$ 
if $H$ is obtained by
substituting every edge $uv$ of $G$ by a path of new internal 
vertices between $u$ and $v$. 
The original vertices of
$G$ in $H$ are called \emph{branch vertices}.
\end{defi}

\begin{defi}
Given is a rooted tree $T$ and a rooted subtree $T'$ of $T$.
We say that $T'$ is \emph{compatible with} $T$ if any two 
vertices of $T'$ have the same ancestor-descendant relation
in both $T'$ and $T$.
\end{defi}

We are now ready to state the following useful lemma.

\begin{lemma}\label{lemma:vector}%
Let $B^*$ be a subdivision of $B_d$.
Suppose we color (without any restrictions) 
the vertices of $B^*$ with $k$ colors.
Then,
there exists a vector $a=(a_1,a_2, \dots ,a_k)$ 
such that $\sum_{i=1}^{k} a_i \geq d$ and for every $i \in \{1,\dots,k\}$, 
$B^*$ contains a subdivision $T^i$ of $B_{a_i}$ such that 
(1)~$T^i$ is compatible with $B^*$ and
(2)~the branch vertices of $T^i$ are all colored with $i$.
\end{lemma}
\begin{proof}
We construct the vector $a$ by induction on $d$. 

For $d=1$, $B^*$ has exactly one vertex, say with color $i$. 
Then, $a$ is such that $a_i=1$  and every other coordinate is 0.

For $d > 1$, consider the tree $B^*$ rooted at the branch vertex
$v$ that corresponds to the root of $B_d$.
Each of the left and right subtrees of $v$ contains a subdivision
of $B_{d-1}$. Thus, by the inductive hypothesis, we construct
vector $a'$ for the left subtree and $a''$ for the right subtree.
If $a' \neq a''$, then $a$ is such that 
$a_i = max(a'_i, a''_i)$, for $i \in \{1,\dots,k\}$.
If $a' = a''$, then $a$ is such that $a_i = a'_i + 1$
for the color $i$ of the root $v$ and $a_j = a'_j$ for $j \neq i$. 
\end{proof}

\begin{theorem}\label{binary}
For $d \geq 1$ and for every subdivision $B^*$ of $B_d$, 
$\ODD(B^*) \geq \sqrt{d}$. 
%
\end{theorem}
\begin{proof}
Fix an optimal odd coloring with $k$ colors. 
Fix a color $i \in \{1,\dots,k\}$ 
for which in lemma~\ref{lemma:vector} we have $a_i \geq d/k$. 

Consider the $2^{a_i-1}$ paths that originate in a leaf of the 
$B_{a_i}$ subdivision and end in its root branch vertex.
%
We claim that 
the parity vectors of the $2^{a_i-1}$ paths must be all different.
Indeed, if there were two paths with the same parity vector,
then the symmetric difference of the paths plus their lowest
common vertex would form a path where the parity of each color is
even, except maybe for color $i$, but since this new path starts and ends with color $i$, deleting any of its ends yields a path whose parity vector 
is the all-zero vector, a contradiction.

There are at most $2^k-1$ parity vectors, 
thus $2^k-1\ge 2^{a_i-1}\ge  2^{\left\lceil d/k\right\rceil-1}$. 
From this we get $k> \left\lceil \fracol{d}{k} \right\rceil-1$ 
which is equivalent to $k\ge\left\lceil \fracol{d}{k} \right\rceil$ 
using the integrality.  
Thus, $k\ge\sqrt d$. 
\end{proof}

\begin{remark}\label{rem:binomialtrees}
The (rooted) \emph{binomial tree} $T_d$ with $2^{d-1}$ vertices is defined
as follows: $T_1$ is a single vertex; for $d>1$, $T_d$ consists of
two disjoint copies of $T_{d-1}$ and an edge between their two roots,
whereas the root of $T_d$ is the root of the first copy.
These trees are used in \cite{parityvc2011,parityvctrees2012}
and play a similar role to binary trees in our work.
It is not difficult to prove by induction that 
$T_{2d-1} \issupergraphof B_{d}$.
As a result, theorem~\ref{binary} implies 
$\ODD(T_d) = \Omega(\sqrt{d})$, 
which improves the trivial lower bound 
$\ODD(T_d) = \Omega(\log{d})$ from \cite{parityvctrees2012}.
\end{remark}

\subsection{Upper bound for unique-maximum number of arbitrary trees}%
\label{subsec:arbitrarytrees}

We will try to find either a long path or a subdivision of 
a deep complete binary tree in
every tree with high UM chromatic number. 
For this, we need the notion of UM-critical trees and their
characterization from \cite{orderedcoloring}.

\begin{defi}
A graph is \emph{UM-critical}, if the UM chromatic number 
of any of its subgraphs is smaller than its UM chromatic number.
We also say that a graph is \emph{$k$-UM-critical}, if it is
UM-critical and its UM chromatic number equals $k$.
\end{defi}

\begin{example}
The complete graph $K_k$ and the path with $2^{k-1}$ vertices 
are both $k$-UM-critical.
For $k\le 3$ there is a unique $k$-UM-critical tree, the path with $2^{k-1}$ vertices.
Consider the following tree $T$ on $8$ vertices:
Take two copies of $P_4$ and draw an edge from an internal vertex of
one $P_4$ to an internal vertex of the other $P_4$.
Tree $T$ is
$4$-UM-critical
and $\CF(T)=3$. ($T$ is the smallest tree where the CF and UM 
chromatic numbers differ.)
\end{example}

\begin{theorem}[Theorem 2.1 in \cite{orderedcoloring}]\label{umcrit}
For $k > 1$, 
a tree is $k$-UM-critical if and only if 
it has an edge that connects two $(k-1)$-UM-critical trees.
\end{theorem}


\begin{remark}
A $k$-UM-critical tree has exactly $2^{k-1}$ vertices and
the connecting edge must always be the central edge of the tree.
This implies that there is a unique way to partition the vertices 
of the $k$-UM-critical tree to two sets of vertices, each inducing 
a $(k-1)$-UM-critical tree, and so on.
\end{remark}

Now we can define the \emph{structure trees} of UM-critical trees.

\begin{defi}
For $l \in \{0,\dots,k-1\}$,
the \emph{$l$-deep structure tree} of a $k$-UM-critical tree is the 
tree graph with a vertex for every one of the $2^l$ $(k-l)$-UM-critical 
subtrees that we obtain by repeatedly applying theorem
\ref{umcrit}, and an edge between two vertices 
if the corresponding $(k-l)$-UM-critical  
subtrees have an edge between them in the  $k$-UM-critical tree.
\end{defi}

\begin{example}
The $0$-deep structure tree of any UM-critical tree is a vertex.
The $1$-deep structure tree of any UM-critical tree is an edge.
The $2$-deep structure tree of any UM-critical tree is a path with $4$ vertices.
The $(k-1)$-deep structure tree of a $k$-UM-critical tree is itself.
\end{example}

\begin{remark}
It is not difficult to prove that 
the $l$-deep structure tree of a 
UM-critical tree is an $(l+1)$-UM-critical tree.
\end{remark}

We start with a few simple observations.

\begin{obs}\label{UMpath}
If an $(l+1)$-UM-critical tree has no vertex of degree at least $3$, then it is the path with $2^l$ vertices.
\end{obs}
\begin{proof}
Delete the central edge and use induction. 
\end{proof}

\begin{obs}\label{longpath}
If an $(l+2)$-UM-critical tree has only one vertex of degree at least $3$, then it contains a path with $2^l$ vertices that ends in this vertex.
\end{obs}
\begin{proof}
After deleting its central edge, one of the resulting $(l+1)$-UM-critical trees must be a path that was connected to the rest of the graph with one of its ends, thus we can extend it until the high degree vertex. 
\end{proof}

\begin{obs}
If a tree contains two non-adjacent vertices with degree at least $3$, then it 
contains a subdivision of $B_3$.
\end{obs}
\begin{proof}
The non-adjacent degree $3$ vertices will be the second level of the binary tree, and any vertex on the path connecting them the root. 
\end{proof}

\begin{clai}
\label{claim:level3}%
An $(l+2)$-UM-critical tree contains a path with $2^l$ vertices or a 
subdivision of $B_3$.
\end{clai}
\begin{proof}
Because of the previous observations, we can suppose that our tree
has exactly two vertices with degree at least $3$ and these are
adjacent. If the central edge is not the one between these
vertices, then the graph must contain an $(l+1)$-UM-critical
subgraph without any vertex with degree at least $3$, thus it is
the path with $2^l$ vertices because of observation~\ref{UMpath}.
If it connects the two high degree vertices, then, 
using observation~\ref{longpath}, we have two paths with $2^{l-1}$ vertices in the $(l+1)$-UM-critical subgraphs obtained by deleting the central edge ending in these vertices, thus with the central edge they form a path with $2^l$ vertices. 
\end{proof}

We are now ready to prove our main lemma, before the proof of the
upper bound.

\begin{lemma}\label{lemma:pathorbinarytree}
For $k\ge 3$ and any $l$, every $k$-UM-critical tree contains a path with $2^l$ vertices or a 
subdivision of $B_{\left\lceil \frac{k+l+3}{l+2}\right\rceil}$.
\end{lemma}
\begin{proof}
The proof is by induction on $k$.
For $3\le k\le l+1$, the statement is true 
since $B_2=P_3$.
For $l+2\le k\le 2l+3$, the statement is equivalent to our
claim~\ref{claim:level3}. 
For $k>2l+3$, take the $(l+2)$-deep structure tree $S$ of the tree.
If $S$ does not contain a path with $2^l$ vertices, then, 
using  claim~\ref{claim:level3},
$S$ contains a subdivision of $B_3$.
Every one of the four leaf branch vertices of the above $B_3$ subdivision 
corresponds to a $(k-l-2)$-UM-critical subtree of the original tree.
By induction,  each one of the above four subtrees 
must contain a path with $2^l$ vertices or a
subdivision of the complete binary tree with 
$\left\lceil \frac{k-l-2+l+3}{l+2}\right\rceil = \left\lceil \frac{k+l+3}{l+2}\right\rceil-1$ levels.
If any of them contains the path, we are done.
If each one of them contains a $B_{\left\lceil \frac{k+l+3}{l+2}\right\rceil-1}$
subdivision, then for every one of the four leaves, we can connect 
at least one of the two disjoint 
$B_{\left\lceil \frac{k+l+3}{l+2}\right\rceil-2}$ subdivisions of the 
$B_{\left\lceil \frac{k+l+3}{l+2}\right\rceil-1}$
subdivision in the leaf (as in figure~\ref{fig:bintree}, where each of the four relevant
$B_{\frac{k+l+3}{l+2}-2}$ subdivisions and the paths connecting them are shown with heavier lines) to obtain a subdivision of a complete binary tree
with $\left\lceil \frac {k+l+3}{l+2}\right\rceil-2+2$ levels, 
thus we are done. 
\end{proof}

\begin{figure}[ht]
\begin{center}
\scalebox{0.58}{\includegraphics{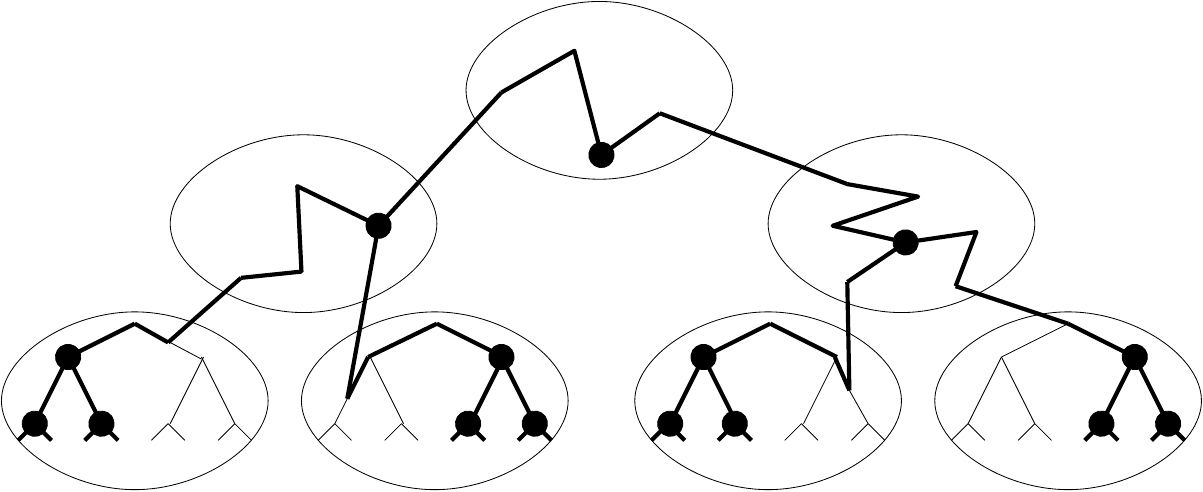}}
\caption{Constructing a deep binary tree using induction for structure trees}
\label{fig:bintree}
\end{center}
\end{figure}

\begin{theorem}
For every tree $T$, 
$\ODD(T)\ge (\UM(T))^{\frac{1}{3}}-O(1)$. 
\end{theorem}
\begin{proof}
If $\UM(T)=k$, then $T$ contains a $k$-UM-critical tree, which
(according to lemma~\ref{lemma:pathorbinarytree}) contains a
$P_{2^l}$ or a subdivision $B^*$ of 
$B_{\left\lceil \frac{k+l+3}{l+2}\right\rceil}$.
Using monotonicity of $\ODD$ with respect to subgraphs
(observation~\ref{obs:monotonicitysubgraphs}),
together with 
$\ODD(P_{2^l}) = l+1$ (claim~\ref{path}) and 
$\ODD(B^*) \geq \sqrt \frac {k+l+3}{l+2}$
(from theorem~\ref{binary}),
we get $\ODD(T) \geq \max \Bigl(l+1, \sqrt{\frac{k+l+3}{l+2}} \Bigr)$.
%
%
Choosing $l$ to be the closest integer 
to the solution of $l+1 = \sqrt{\frac {k+l+3}{l+2}}$, 
we get $l = k^\frac{1}{3} + \Theta(1)$.
Therefore,  $\ODD(T) \geq (\UM(T))^\frac 13-O(1)$. 
\end{proof}

\subsection{Trees with different unique-maximum and conflict-free numbers}

We have seen that $\UM(B_d)=d$. We intend to show conflict-free colorings of some complete binary trees that use substantially less colors.
We start with a simple example demonstrating our method. 

\begin{clai}
$\CF(B_7) \leq 6$.
\end{clai}
\begin{proof}
Color the root with $1$, the second level with $2$. Deleting the
colored vertices leaves four $B_5$ subtrees. In each of these
subtrees, every level will be monochromatic. From top to bottom,
in the first use the colors $3,4,5,1,2$, in the second
$4,5,6,1,2$, in the third $5,6,3,1,2$ and in the forth
$6,3,4,1,2$. It is not difficult to verify that this is indeed a conflict-free coloring (but it will also follow from later results). Observe that in the top 2 levels 2 colors are used, in the next 3 levels 4 colors, and in the last 2 levels the same 2 colors are used as the ones in the top level.
\end{proof}

\begin{corollary}\label{iterate}
$\CF(B_{2(r+1)+3r}) \leq 4r + 2$.
\end{corollary}
\begin{proof}
In the previous construction, every leaf had color $2$ and their
parents had color $1$. Every such three vertex part can be the top
of a new tree, similar to the original, and replacing $3,4,5,6$
with four new colors. This gives a tree with $12$ levels and $10$
colors. It is not difficult to verify that this is indeed a conflict-free coloring (but it will also follow from later results). Repeatedly applying this procedure, so that we have colors 1, 2 appearing in $2(r+1)$ levels and $r$ disjoint sets of 4 colors each, 
we get a coloring of $B_{2(r+1)+3r}$ using $4r+2$ colors.
\end{proof}

To examine more closely why these colorings are conflict-free, 
we need to define some notions.

\begin{defi}
An \emph{ordered set} is a sequence in which no element repeats.
Two ordered sets are \emph{equal as sets} if they have the same
elements (ignoring the order of elements).
\end{defi}

\begin{defi}
A family $\calF$ of ordered sets is said to be \emph{prefix set-free}, 
if any prefix of any ordered set in $\calF$ is different from any other 
ordered set in $\calF$ as a set (without the ordering).
If the ground set has $n$ elements, 
every sequence has length at least $k$, 
and the cardinality of $\calF$ is at least $2^d$, 
then we say that $\calF$ is a $[k,d,n]$ \emph{PSF family}.
\end{defi}

\begin{example}
$\{\oset{1,3},\oset{1,2,3}\}$ is a $[2,1,3]$ PSF family
and 
$\{\oset{1},\oset{2,1},\allowbreak \oset{2,3}, \allowbreak \oset{3,1},
   \allowbreak \oset{3,1,2}\}$ is a $[1,2,3]$ PSF family but 
$\{\oset{2,1},\oset{1,2,3}\}$ is not a PSF family. 
\end{example}

\begin{clai}
For any $[k,d,n]$ PSF family $d\le \log \sum_{i=k}^n \binom{n}{i}$.
\end{clai}
\begin{proof}
Any two ordered sets in the PSF family must differ as sets.
\end{proof}

\begin{clai}\label{bigPSF}
There is a $[k,d,n]$ PSF family with 
$d= \left\lfloor \log \binom{n}{k} \right\rfloor$.
\end{clai}
\begin{proof}
Take all $k$-element subsets of $\{1,\dots,n\}$ and order each arbitrarily.
\end{proof}

Since these bounds do not differ much if 
$k>(\frac{1}{2} +\epsilon)n$, we do not attempt to get sharper bounds.

\begin{theorem}
If there is a $[k,d,n]$ PSF family where the size of every set is at most $k+d$, 
then $\CF(B_{d(r+1)+kr}) \leq nr+d$.
\end{theorem}
\begin{proof}
First, we show that $\CF(B_{k+2d}) \leq n+d$.
Color the top $d$ levels with $d$ colors.
%
Remove the colored vertices and consider the $2^d$ $B_{k+d}$
subtrees left. To each associate an ordered set from the $[k,d,n]$
PSF family and color the whole $i^{\text{th}}$ level with one color, 
the $i^{\text{th}}$ element of the associated ordered set.  Deleting also these colored vertices, we are left with subtrees with at most $d$ levels, 
which we can color with (at most) the same $d$ colors 
we used for the top levels. It is not difficult to check that
the above procedure produces a conflict-free coloring.
By repeating the above procedure $r$ times for $B_{d(r+1)+kr}$, as in corollary \ref{iterate}, we obtain  $\CF(B_{d(r+1)+kr}) \leq nr+d$.
\end{proof}


\begin{corollary}\label{last}
For the sequence of complete binary trees, $\{B_i\}_{i=1}^{\infty}$,
the limit of the ratio of the UM to the CF chromatic
number is at least $\log 3\approx 1.58$.
\end{corollary}

\begin{proof}
%
%
Since $\CF(B_{d(r+1)+kr}) \leq nr+d$, 
the ratio of UM to CF for $B_{d(r+1)+kr}$ 
is at least $(d(r+1)+kr)/(nr+d)$, which tends to $(d+k)/n$ as $r \to \infty$.
From claim \ref{bigPSF} we can choose $d = \floor{\log{\binom{n}{k}}}$. 
If we substitute $k$ with $xn$, then a short calculation shows that to maximize 
$(d+k)/n$ we have to maximize $x+H(x)$, where $H(x)=-x\log x - (1-x)\log(1-x)$
(entropy). 
The function $x+H(x)$ attains its maximum at $x = 2/3$, 
giving a value of $\log 3$ as a lower bound for the limit. 

For completeness, we also include a proof of the existence of the limit.
For brevity, denote $\CF(B_i)$ by $c_i$.
Our goal is to show that sequence $f$, 
with $f_i = c_i/i$, i.e., the ratio of
$CF$ to the $UM$ for $B_i$ has a finite limit.
If the limit exists, then it is finite, because $f_i \in (0, 1]$.
We know that $c_i$ is monotone increasing.
We also know that $c_{i+1}\le {c_i}+1$ since we can take two copies of a good 
CF-coloring of depth $i$ and join them with a root having a new color.
In fact, we even know 
$c_{i+j}\le c_i+c_j$ because we can take a good CF-coloring of depth $j$ and 
put $2^j$ copies of a good CF-coloring of depth $i$ under each of its leaves.
Then, for $n \geq i$, 
\[
\frac{c_n}{n} \leq \frac{\ceil{n/i} c_i}{n} < 
  \left(\frac{n}{i} +1 \right) \frac{c_i}{n} =
  \frac{c_i}{i} + \frac{c_i}{n} \leq 
  \frac{c_i}{i} + \frac{i}{n},
\]
that is, $f_n \leq f_i + i/n$. With this, it is not difficult to prove,
using standard arguments, 
that no two subsequences of $f$ have different
limits, and thus $f$ has a limit.
%
\end{proof}

\begin{remark}
Since $\ODD(T) \leq \CF(T)$, corollary~\ref{last} is also true for
the ratio of the UM to the ODD chromatic number.
In \cite{parityvctrees2012}, the authors prove that for the
sequence of binomial trees (see remark~\ref{rem:binomialtrees}),
the ratio of the CF to the ODD chromatic number
tends at least to $1.5$, disproving a conjecture of 
\cite{parityvc2011}. Therefore, our result disproves the
aforementioned conjecture in a stronger sense (our limit is at
least $\log{3} \approx 1.58$). 
\end{remark}



\section{Discussion and open problems}\label{sec:conclusion}

In the literature of conflict-free coloring, hypergraphs that are
induced by geometric shapes have been in the focus. It would be
interesting to show possible relations between unique-maximum and conflict-free chromatic
numbers in this setting.

The exact relationship between the two
chromatic numbers with respect to paths for general graphs still
remains an open problem. In
\cite{ChTjda2011}, only graphs which have unique-maximum
chromatic number about twice the conflict-free chromatic number
were exhibited, but the only bound proved on $\chiump(G)$ was
exponential in $\chicfp(G)$. 
In fact it is even possible that $\chiump(G)\le2\chicfp(G)-2$.
The first step to prove this would be to show that 
$\chiump(T)=O(\chicfp(T))$ for trees. 
It would also be interesting to
extend our results to other classes of graphs.

\subsection*{Acknowledgment}
We would like to thank G\'eza T\'oth for fruitful discussions and ideas about improving the lower bound in corollary \ref{last}.

\bibliographystyle{plain}
\bibliography{cfcolor}

\end{document}